\newcommand{\R}{\mathbb R}
\newcommand{\ii}í
\newtheorem{theorem}{Theorem}[section]
\newtheorem{corollary}[theorem]{Corollary}
\newtheorem{lemma}[theorem]{Lemma}
\newtheorem{proposition}[theorem]{Proposition}
\theoremstyle{definition}
\newtheorem{definition}[theorem]{Definition}
\newtheorem{remark}[theorem]{Remark}
\numberwithin{equation}{section}
\begin{document}

\title[The Implicit Function Theorem] {The Implicit Function Theorem for continuous functions}
\author[C. Biasi, C. Gutierrez and E. L. dos Santos]
{Carlos Biasi, Carlos Gutierrez and Edivaldo L. dos Santos}

\address{Carlos Biasi and Carlos Gutierrez}

\address{Universidade de S\~ao Paulo,
Departamento de Matemática-ICMC, Caixa Postal $_{}$ \hspace{.15cm}
668, 13560-970, S\~ao Carlos SP, Brazil}
\email{biasi@icmc.usp.br}\email{gutp@icmc.usp.br}

\address{Edivaldo L. dos Santos}

\address{Universidade Federal de São Carlos, Departamento de
Matemática  Caixa $_{}$ \hspace{.35cm} Postal 676, 13565-905, S\~ao
Carlos SP, Brazil} \email{edivaldo@dm.ufscar.br}

\begin{abstract}
{In the present paper we obtain a new homological version of the
implicit function theorem and some versions of the Darboux theorem.
Such results are proved for continuous maps on topological
manifolds. As a consequence, some versions of these classic theorems
are proved when we consider differenciable (not necessarily $C^{1}$)
maps.}

\end{abstract}
\keywords{Darboux theorem, Implicit function theorem, differentiable
maps, degree of a map}
\thanks{}
\subjclass[2000]{Primary 58C15  Secondary 47J07 } \maketitle
\maketitle {\section{ Introduction }

This paper deals with a generalization of the classical Implicit
Function Theorem. In this respect, C. Biasi and E. L. dos Santos
proved {\it a homological version of the implicit function theorem}
for continuous functions on general topological spaces which has
interesting applications in the theory of topological groups. More
specifically, Theorem 2.1 of \cite{Ed} states that: {``If $X,Y,Z$
are Hausdorff spaces, with $X$ locally connected, $Y$ locally
compact and $f:X\times Y\to Z$ such that
$(f_{x_{0}})_{*}=(f|_{(x_{0}\times Y)})_{*} :H_{n}(Y,Y-y_{0})\to
H_{n}(Z,Z-z_{0})$ is a nontrivial homomorphism, for some natural
$n>0$, where $\{y_{0}\}=(f_{x_{0}})^{-1}(\{z_0 \})$, then there
exists an open neighborhood $V$ of $x_0$ and  a function $g:V\subset
X\to Y$ satisfying the equation $f(x,g(x))=z_{0}$, for each $x\in
V$. Moreover, $g$ is continuous at $x_0$".} This result establishes
the existence of an implicit function $g$, however,  the continuity
of such function is guaranteed only at the point $x_{0}$. In this
paper, under little stronger assumptions, we can ensure the
continuity of $g$ at a neighborhood of $x_0$. More precisely,

\medskip

\noindent {\bf{Theorem \ref{IFT}}} \emph{Let $X$ be a locally
pathwise connected Hausdorff space and let $Y,Z$ be oriented
connected topological manifolds of dimension $n$. Let $f:X \times \,
Y \,\to\, Z$ be a continuous map such that, for all $x \in X$, the
map $f_x:Y \to Z$ given by $f_x(y)=f(x,y)$ is open and discrete.
Suppose that for some $(x_0,y_0) \in X \times \, Y$,
$|\deg(f_{x_0},y_0)|=1$. Then there exists a neighborhood $V$ of
$x_0$ in $X$ and a continuous function $g:V \to Y$ such that
$f(x,g(x))=f(x_0,y_0)$, for all $x \in V$.}

\medskip

As a consequence we obtain

\medskip

\noindent {\bf Corollary \ref{coro-dif}} \emph{Let $X$ be a locally
pathwise connected Hausdorff space. Let $U\subset \mathbb{R}^{n}$ be
an open subset of $\mathbb{R}^{n}$ and let $f:X\times U\to
\mathbb{R}^{n}$ be a continuous map. Suppose that $f_{x}:U\to
\mathbb{R}^{n}$ is a differentiable (not necessarily $C^{1}$) map
without critical points, for each $x\in X$. Then there exist a
neighborhood $V$ of $x_{0}$ and a continuous function $g:V\to U$
such that $f(x,g(x))=z_{0}$, for each $x\in V$.}

\medskip

\noindent{\bf Corollary \ref{coro-eq-dif}} \emph{Let $I \times V
\times W$ be an open neighborhood of $(t_0,x_0,y_0)$ in $\R \times
\R^n \times \R^m$ and let $F:(I \times V \times W, (t_0,x_0,y_0))
\longmapsto (\R^m,0)$ be a continuous map. Suppose that, for all
$(t,x) \in I \times V$, the map $y \in W \longrightarrow F(t,x,y)$
is differentiable (but not necessarily $C^1$) and without critical
points. Then the differential equation
$$F(t,x,x')=0, \quad x(t_0)=x_0, \quad x'(t_0)=y_0$$
has a solution in some interval
$(t_0-\varepsilon,t_0+\varepsilon)$.}

\medskip

We also prove the following versions of Darboux Theorem

\medskip

\noindent {\bf Theorem \ref{HDT}} \emph{Let $M$ and $N$ be oriented
connected topological manifolds of dimension $n$ and let $f:M\to N$
be a continuous map. Suppose that there exist $x_{0}$ and $x_{1}$ in
$M$ such that $\deg(f,x_{0})<0$ and $\deg(f,x_{1})>0$. Then there
exists $x_{2}$ in $M$ such that $\deg(f,x_{2})=0$}

\medskip

\noindent{\bf Corollary \ref{DDT}} \emph{Let $M$ and $N$ be oriented
connected topological manifolds of dimension $n$ and let $f:M\to N$
be a differentiable map. Suppose that there exist $x_{0}$ and
$x_{1}$ in $M$ such that $\det[f^{\prime}(x_{0})]<0$ and
$\det[f^{\prime}(x_{1})]>0$. Then $f$ has a critical point.}

\medskip

A fundamental step to establish the versions of the Implicit
Function Theorem and Darboux Theorem is Lemma \ref{keylemma} (Key
Lemma).

\section{Preliminares}

 In this section we introduce  some basic notions, notations and
results that will be used throughout this paper. All considered
singular homology groups will always have coeffi\-cients in
$\mathbb{Z}$. By {\it dimension} we understand the usual topological
dimension  in the sense of \cite{hw}.


In the following definitions, $X$ and $Y$ will be oriented connected
topological manifolds of dimension $n\geq 1$ and  $f:X\to Y$ will be
a continuous map. The definition of orientation for topological
manifolds can be found, for instance, in \cite{dold}.

\begin{definition}\rm A map $f:X\to Y$ is said
to be {\it discrete} at a point $x_{0}$, if there exists a
neighborhood $V$ of $x_{0}$ such that $f(x)\not=f(x_{0})$, for any
$x\in V-x_{0}$, that is, $f^{-1}(f(x_{0}))\cap (V-x_{0})=\emptyset$.
\end{definition}

\begin{definition} \rm Let $y\in Y$ such that $L_{y}=f^{-1}(y)$
is a compact subset of X. Let $\alpha_{L_{y}} \in H_{n}(X,X-L_{y})$
and $\beta_{y}\in H_{n}(Y,Y-y)$ be the orientation classes along
$L_{y}$ at $y$; respectively. There exists an integer number
$\deg(f,y)$ such that $f_{*}(\alpha_{L_{y}})={\rm
deg}(f,y)\cdot\beta_{y}$, where $f_{*}:H_{n}(X,X-L_{y})\to
H_{n}(Y,Y-\{y\})$ is the homomorphism induced by $f$. The number
$\deg(f,y)$ is called {\it degree of $f$ at $y$.}
\end{definition}

\begin{definition}\label{def:grau}\rm Let $f:X\to Y$ be a
{\it discrete map} at a point $x_{0}$ and let us denote by
$y_{0}=f(x_{0})$. Consider $\alpha_{x_{0}}\in H_{n}(V,V-x_{0})$ and
$\beta_{y_{0}}\in H_{n}(Y,Y-y_{0})$ the orientation classes at
$x_{0}$ and $y_{0}$, respectively. There exists an integer number
$\deg(f,x_{0})$ such that $f_{*}(\alpha_{x_{0}})={\rm
deg}(f,x_{0})\cdot\beta_{y_{0}}$, where the homomorphism
$f_{*}:H_{n}(V,V-x_{0})\to H_{n}(Y,Y-y_{0})$ is induced by $f$. The
number $\deg(f,x_{0})$ is called {\it local degree of $f$ at
$x_{0}$.}
\end{definition}

\begin{definition}\label{def:graugeneralizado}\rm
Suppose that $f:X\to Y$ is not necessarily a  discrete map. Define
\begin{equation}\deg(f,x)=\left\{ \begin{array}{ll}
0, & \mbox{if $f$ is not {\it discrete} at $x.$\hspace{.3cm}}  \\
 \deg(f,x), &  \mbox{\rm if $f$ is {\it discrete} at $x$, in
 the sense of Definition \ref{def:grau}.\nonumber}
\end{array} \right.
\end{equation}
\end{definition}

 The proof of the following two propositions can be found in \cite{dold}.

\begin{proposition}\label{degree:prop} Let $X$ and $Y$ be oriented connected
topological manifolds of dimen\-sion $n\geq 1$ and let $f:X\to Y$ be
a continuous map. Consider $y\in Y$  such that $f^{-1}(y_{0})$ is
finite. Then,
\begin{equation} \deg(f,y)=\sum_{x\in f^{-1}(y)}\deg(f,x).\nonumber
\end{equation}
\end{proposition}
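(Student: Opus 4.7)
The plan is to reduce the computation of $\deg(f,y)$ to the local degrees at the preimage points by an excision argument. First I would use that $X$ is Hausdorff (as a topological manifold) and that $f^{-1}(y)=\{x_1,\dots,x_k\}$ is finite to pick pairwise disjoint open neighborhoods $V_i$ of $x_i$ with $V_i\cap f^{-1}(y)=\{x_i\}$. In particular each $x_i$ is isolated in its fibre, so $f$ is discrete at $x_i$ in the sense of Definition~2.2 and $\deg(f,x_i)$ is the local degree of Definition~2.3 rather than the forced zero of Definition~2.4; thus every term on the right-hand side is well defined and comes from the same homological construction as the left-hand side.

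The key step is excision applied to the closed set $X-\bigcup_i V_i$ in the pair $(X,X-L_y)$, which gives the isomorphism
$$H_n(X,X-L_y)\;\cong\;\bigoplus_{i=1}^{k} H_n(V_i,V_i-x_i).$$
Under this isomorphism the orientation class $\alpha_{L_y}$ corresponds to the tuple $(\alpha_{x_1},\dots,\alpha_{x_k})$ of local orientation classes. Once this compatibility is granted, the rest is formal: the composition
$$H_n(V_i,V_i-x_i)\lra H_n(X,X-L_y)\xrightarrow{f_*}H_n(Y,Y-y)$$
agrees by functoriality with the map used in Definition~2.3 to compute $\deg(f,x_i)$, so it sends $\alpha_{x_i}$ to $\deg(f,x_i)\,\beta_y$. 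Summing the contributions and comparing with $f_*(\alpha_{L_y})=\deg(f,y)\,\beta_y$ yields the claim, because $\beta_y$ generates $H_n(Y,Y-y)\cong\Z$.

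The main obstacle I anticipate is the orientation-class compatibility used in the excision step: one must verify that the globally defined class $\alpha_{L_y}$, characterized by restricting to the prescribed local orientation at each point of $L_y$, actually goes to $(\alpha_{x_1},\dots,\alpha_{x_k})$ under the excision decomposition. This is essentially the content of the standard construction of orientation classes along a compact subset of an oriented manifold (as developed in Dold's book), and once this lemma is cited the rest of the argument is bookkeeping.
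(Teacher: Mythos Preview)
Your argument is correct and is precisely the standard excision proof; note that the paper does not give its own proof of this proposition but simply refers to Dold's \emph{Lectures on Algebraic Topology}, where exactly this decomposition and orientation-class compatibility are carried out. So your proposal matches the cited reference and there is nothing to add.
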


\begin{proposition} \label{degree:prop1} Let $X$ and $Y$ be oriented connected
topological manifolds of dimension $n\geq 1$ and $f:X\to Y$ be a
continuous map. Let us consider a compact connected subset $K$ of
$Y$ such that $L_{K}=f^{-1}(K)$ is compact. Then, $\deg(f,y)$ does
not depend of $y\in K$.
\end{proposition}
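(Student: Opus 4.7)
My plan is to produce a single integer $c$ that simultaneously equals $\deg(f,y)$ for every $y\in K$. The natural candidate is obtained by applying $f_*$ to a global orientation class supported along $L_K$ and measuring the result against a global orientation class supported along $K$; the local degree at any individual point is then recovered by restriction.

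First I would invoke the standard existence theorem for orientation classes along compact subsets of an oriented manifold to obtain canonical classes $\alpha_{L_K}\in H_n(X,X-L_K)$ and $\beta_K\in H_n(Y,Y-K)$, characterized by the property that they restrict to the local orientations at every point. In particular, under the maps induced by the pair inclusions $(X,X-L_K)\hookrightarrow (X,X-L_y)$ (valid since $L_y\subset L_K$) and $(Y,Y-K)\hookrightarrow (Y,Y-y)$, the classes $\alpha_{L_K}$ and $\beta_K$ are sent respectively to $\alpha_{L_y}$ and $\beta_y$ for every $y\in K$. Placing $f_*$ as the vertical arrows gives a commutative square whose top edge is $H_n(X,X-L_K)\to H_n(X,X-L_y)$ and whose bottom edge is $H_n(Y,Y-K)\to H_n(Y,Y-y)$.

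The key ingredient, and also the main obstacle, is the fact that for $K$ compact and connected in an oriented $n$-manifold $Y$ one has $H_n(Y,Y-K)\cong\Z$, generated by $\beta_K$. This is a standard consequence of Poincar\'e--Lefschetz duality (for an oriented manifold the orientation sheaf is the constant sheaf $\Z$, whose global sections over a compact connected set are $\Z$) and it is precisely where the connectedness hypothesis enters; the argument is available in \cite{dold}. Granting it, $f_*(\alpha_{L_K})=c\,\beta_K$ for a unique integer $c$, and chasing the commutative square gives
\[
\deg(f,y)\,\beta_y \;=\; f_*(\alpha_{L_y}) \;=\; c\,\beta_y
\]
in $H_n(Y,Y-y)\cong\Z$, so that $\deg(f,y)=c$ is independent of $y\in K$.
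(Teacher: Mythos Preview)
Your argument is correct and is exactly the standard proof: produce the global orientation classes $\alpha_{L_K}$ and $\beta_K$, use that $H_n(Y,Y-K)\cong\Z$ for $K$ compact and connected in an oriented $n$-manifold, and then restrict. The paper does not give its own proof of this proposition but simply cites \cite{dold}; the argument you have written is precisely the one found there, so your approach coincides with the paper's intended reference.
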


 \noindent As an immediate consequence of
Proposition \ref{degree:prop1} we have the following:

\begin{corollary} Let $X$ and $Y$ be oriented connected
topological manifolds of dimension $n\geq 1$ and $f:X\to Y$ be a
proper continuous map. Then, for all $y\in Y,$ \, {\rm
deg}$(f,y)=\deg(f)$.
\end{corollary}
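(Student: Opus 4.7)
The plan is to reduce the corollary to Proposition \ref{degree:prop1} by exploiting path-connectedness of $Y$ together with properness of $f$. Since the manifold $Y$ is connected and every topological manifold is locally path-connected, $Y$ is in fact path-connected. So for any two points $y_1, y_2 \in Y$ I can pick a continuous path $\gamma:[0,1] \to Y$ with $\gamma(0)=y_1$ and $\gamma(1)=y_2$.

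Next I would set $K = \gamma([0,1])$. This is a compact connected subset of $Y$. The critical input is that $f$ is proper, which is precisely the condition guaranteeing that $L_K = f^{-1}(K)$ is compact. With $K$ compact connected and $L_K$ compact, the hypotheses of Proposition \ref{degree:prop1} are met, so $\deg(f,y)$ is constant on $K$; in particular $\deg(f,y_1)=\deg(f,y_2)$.

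Since $y_1, y_2$ were arbitrary, the map $y \mapsto \deg(f,y)$ is constant on $Y$, and I would simply define $\deg(f)$ to be this common value. There is no real obstacle here: the work is done by Proposition \ref{degree:prop1}; the only point worth mentioning is that connectedness of $Y$ must be upgraded to path-connectedness, for which local path-connectedness of manifolds is used.
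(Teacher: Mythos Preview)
Your argument is correct and is exactly the natural fleshing out of what the paper intends: the paper states the corollary as ``an immediate consequence of Proposition \ref{degree:prop1}'' without further proof, and your path-based choice of $K$ together with properness of $f$ is precisely the way to make that immediacy explicit.
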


 In \cite{Vaisala}, Väisälä proved  the following version of the $\breve{C}$ernavskii's
theorem ( see \cite{cerna, cerna1}):

\begin{theorem}\label{cerna} Let $X$ and $Y$ be
topological manifold of dimension $n$. Suppose that $f:X\to Y$ is an
open and discrete map. Then $\dim (f(B_{f}))\leq n-2$ and $\dim
(B_{f})\leq n-2$, where $B_{f}$ denotes the set of points $x$ of $X$
such that $f$ is not a local homeomorphism at $x$.
\end{theorem}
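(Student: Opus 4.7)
The plan is to reduce the problem to a local analysis built on normal neighborhoods and the local degree of Definition \ref{def:grau}, and then argue by contradiction that a branch set of dimension $n-1$ is incompatible with the way the sheets of $f$ must glue. Because $f$ is open and discrete, for every $x_0 \in X$ one can find arbitrarily small open neighborhoods $U \ni x_0$ with $\overline{U}$ compact and $f^{-1}(f(x_0)) \cap \overline{U} = \{x_0\}$; on such a \emph{normal} $U$, the restriction $f|_U : U \to f(U)$ is a proper open discrete surjection, so $\mu(x_0) := \deg(f,x_0)$ is a well-defined nonzero integer that is independent of the choice of $U$. A standard consequence of invariance of domain is that $x_0 \notin B_f$ precisely when $|\mu(x_0)| = 1$, so $B_f$ coincides with the locus where the local degree has absolute value at least $2$.

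Next I would exploit the covering-like structure of $f$ away from the branch image. On $U \setminus f^{-1}(f(B_f \cap U))$, the restriction of $f$ is a local homeomorphism and proper onto its image, hence a finite-sheeted covering of each connected component of $f(U) \setminus f(B_f \cap U)$; by Proposition \ref{degree:prop1} the number of sheets over such a component equals $|\mu(x_0)|$ whenever $x_0$ lies over it. This replaces the smooth notion of a regular value in the topological setting and is the crucial bridge between the algebraic invariant $\mu$ and the geometry of $B_f$.

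The contradiction argument then runs as follows. Suppose $\dim B_f \geq n-1$ and pick $x_0 \in B_f$ together with a normal neighborhood $U$ homeomorphic to $\R^n$. By the Hurewicz--Wallman separation theorem \cite{hw}, a subset of $\R^n$ of dimension $n-1$ locally disconnects $\R^n$, so $B_f \cap U$ locally separates $U$ and, using that $f$ is open and hence cannot raise dimension, $f(B_f \cap U)$ locally separates $f(U)$. Comparing the sheet count on the two sides of the separating piece, its value would have to jump by $|\mu(x_0)| - 1 \neq 0$ across the wall, contradicting that the local degree is locally constant on $f(U) \setminus f(B_f \cap U)$. The bound $\dim f(B_f) \leq n-2$ is then deduced by an essentially symmetric argument on the target side, together with the open-map dimension inequality applied to $f|_{B_f}$.

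The principal obstacle is making the ``degree jump across a separating wall'' rigorous in the purely topological category: there is neither smooth nor PL transversality available, only local singular homology and topological separation, so one has to rely on delicate inductive dimension-theoretic arguments to promote the infinitesimal mismatch of local degrees into a genuine codimension bound. This is the technical core of V\"ais\"al\"a's paper \cite{Vaisala}, and in practice I would invoke that reference to close the argument rather than attempting to reconstruct the full proof from scratch.
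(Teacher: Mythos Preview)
The paper does not prove Theorem~\ref{cerna} at all: it is quoted verbatim as V\"ais\"al\"a's version of \v{C}ernavski\u{\i}'s theorem, with a bare citation to \cite{Vaisala,cerna,cerna1}, and is then used as a black box in the Key Lemma. So there is nothing to compare your argument against; in the end you also defer to \cite{Vaisala}, which is exactly what the paper does.

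That said, the sketch you give before deferring contains real errors, not just technical gaps. First, the implication you invoke from \cite{hw} goes the wrong way: the Hurewicz--Wallman separation theorem says that a closed set which separates an open subset of $\R^n$ must have dimension at least $n-1$, not that every set of dimension $n-1$ locally separates. A line segment in $\R^2$ has dimension $1$ and separates nothing. So from the assumption $\dim B_f \geq n-1$ you cannot conclude that $B_f\cap U$ disconnects $U$, and the whole ``degree jump across a wall'' picture collapses at the outset. Second, the characterisation ``$x_0\notin B_f$ iff $|\mu(x_0)|=1$'' is not a standard consequence of invariance of domain in the direction you need: that local degree $\pm 1$ forces $f$ to be a local homeomorphism is itself a nontrivial statement for merely open discrete maps between topological manifolds, and proving it typically already uses the \v{C}ernavski\u{\i}--V\"ais\"al\"a machinery you are trying to establish. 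Third, even granting a separating wall, the asserted jump ``by $|\mu(x_0)|-1$'' in the sheet count is unjustified; nothing in your setup pins down how many sheets coalesce on which side.

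In short: the paper's ``proof'' is a citation, and your honest final sentence is the only part of your proposal that matches it. The intermediate heuristic is not salvageable as written because of the reversed separation implication.
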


\section{Persistence of the sign of $\deg(f,x)$}

The following lemma will be fundamental in the proof of the versions
of the Implicit Function Theorem and  Darboux Theorem.

\begin{lemma}[Key Lemma] \label{keylemma} Let $X$ and $Y$ be oriented
connected topological manifolds of dimension $n$. Suppose that
$f:X\to Y$ is an open and discrete map. Then, for any $x\in X,$ one
has that $\deg(f,x) \ne 0;$ moreover, $\deg(f,x)$ has always the
same sign.
\end{lemma}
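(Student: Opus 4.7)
The plan is to first produce a local compactification of $f$ near an arbitrary $x\in X$ that turns the restriction into a proper, open, discrete map, then use Theorem~\ref{cerna} to identify a connected open subset of $X$ on which $\deg(f,\cdot)$ is constantly $\pm 1$, and finally express $\deg(f,x)$ as a positive multiple of this fixed sign.

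For the first step, since $f$ is discrete and $X$ is locally compact Hausdorff, I would pick a connected open neighborhood $V$ of $x$ (lying in a coordinate chart) with $\overline{V}$ compact and $\overline{V}\cap f^{-1}(f(x))=\{x\}$. The compact set $f(\partial V)\subset Y$ misses $f(x)$, so one can choose a connected open neighborhood $U$ of $f(x)$ disjoint from $f(\partial V)$. Taking $V'$ to be the connected component containing $x$ of $V\cap f^{-1}(U)$, the standard boundary argument (if $K\subset U$ is compact, then $f^{-1}(K)\cap\overline{V}$ cannot meet $\partial V$, hence is a compact subset of $V$) shows that $f|_{V'}:V'\to U':=f(V')$ is a proper, open, discrete map between oriented connected $n$-manifolds, with $(f|_{V'})^{-1}(f(x))=\{x\}$. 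By Definition~\ref{def:grau} and the corollary of Proposition~\ref{degree:prop1}, $\deg(f,x)=\deg(f|_{V'},f(x))=\deg(f|_{V'},y)$ for every $y\in U'$.

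For the second step, Theorem~\ref{cerna} yields $\dim B_f\le n-2$ and $\dim f(B_f)\le n-2$. The set $B_f$ is closed, since its complement is the open locus where $f$ is a local homeomorphism, so the classical fact that a connected $n$-manifold remains connected after removing a closed subset of topological dimension $\le n-2$ ensures $X\setminus B_f$ is connected. On this open set $f$ is a local homeomorphism between oriented connected $n$-manifolds, so $\deg(f,x')\in\{+1,-1\}$ at every $x'\in X\setminus B_f$; this value is locally constant and, by connectedness of $X\setminus B_f$, equals a single $\varepsilon\in\{+1,-1\}$ throughout.

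For the third step, fix $x\in X$ together with the data $V'$, $U'$ of the first step. Since $\dim\bigl(f(B_f)\cap U'\bigr)\le n-2<n$, the open set $U'\setminus f(B_f)$ is nonempty; pick any $y$ in it. The fiber $f^{-1}(y)\cap V'$ is discrete (by discreteness of $f$), compact (by properness of $f|_{V'}$), and nonempty (since $f|_{V'}$ surjects onto $U'$), hence finite. Every $z$ in this fiber lies in $X\setminus B_f$, so $\deg(f,z)=\varepsilon$. Proposition~\ref{degree:prop} applied to $f|_{V'}$ then gives
$$
\deg(f,x)\;=\;\deg(f|_{V'},y)\;=\;\sum_{z\in f^{-1}(y)\cap V'}\deg(f,z)\;=\;\varepsilon\cdot\bigl|f^{-1}(y)\cap V'\bigr|,
$$
which is a nonzero integer whose sign $\varepsilon$ is independent of $x$. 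The main obstacle in the scheme is the compactification step: one must trim $V$ to $V'$ carefully so that $f|_{V'}$ becomes proper, since only then does the corollary of Proposition~\ref{degree:prop1} convert the \emph{local} invariant $\deg(f,x)$ into the \emph{global}, explicit count on the right, after which the $\breve{C}$ernavskii-based sign argument on $X\setminus B_f$ takes over.
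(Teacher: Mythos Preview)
Your proof is correct and follows essentially the same strategy as the paper's: use \v{C}ernavskii's theorem to fix a constant sign $\varepsilon$ on the connected set $X\setminus B_f$, localize near an arbitrary $x$ so that the degree over the target becomes constant, then pick a value $y$ outside $f(B_f)$ and express $\deg(f,x)$ as the sum $\varepsilon\cdot|\text{fiber over }y|$. The only cosmetic difference is that you trim to a genuinely proper restriction $f|_{V'}:V'\to U'$ and invoke the corollary of Proposition~\ref{degree:prop1}, whereas the paper keeps the original $V$ and applies Proposition~\ref{degree:prop1} directly to a single compact connected $\overline{W}\subset Y$.
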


\begin{proof} It follows from Theorem \ref{cerna} that
$\dim B_{f}\leq n-2$, then $X-B_{f}$ is connected. Since $f$ is a
local homeomorphism on $X-B_{f}$, one has that
\begin{equation}\label{f1}\deg(f,x)=c, \forall x\in X-B_{f},
\mbox{where either $c=1$ or $c=-1$}.
\end{equation}
 Let $x_{0}\in B_{f}$ and  denote by $y_{0}=f(x_{0})$. Since $f$ is
a discrete map and $X$ is locally compact, we can choose an open
neighborhood $V$ of $x_{0}$ such that $\overline{V}$ is compact and
$f(x)\not= f(x_{0})$ for all $x\in \overline{V}- x_{0}.$  Then,
there exists an open neighborhood $W$ of $y_{0}=f(x_{0})$ such that
$\overline{W}$ is a compact and connected subset of $Y$ and
$(f|_{\overline V})^{-1}(\overline{W})=
(f|_{V})^{-1}(\overline{W})\subset V$. Therefore,
$(f|_{V})^{-1}(\overline{W})$ is compact and it follows from
Proposition \ref{degree:prop1} that
\begin{equation}\label{f2}
\deg(f|_{V},y)=\deg(f|_{V},y_{0}), \forall y\in \overline{W}
\end{equation}
On the other hand, since $U=(f|_{V})^{-1}(W)$ is an open set in $X$
and $f$ is a open map, we have that $f|_{V}(U)\subset \overline{W}$
is an open set in $Y$ and since $Y-f(B_{f})$ is dense in $Y$ (recall
that  $\dim (f(B_{f}))\leq n-2$ by Theorem \ref{cerna}), there
exists $y_{1}\in f|_{V}(U)-f(B_{f})$. Therefore, it follows from
(\ref{f2}) that $\deg(f|_{V},y_{1})=\deg(f|_{V},y_{0})$. Let
$f^{-1}(y_{1})=\{ x_{1}, \ldots , x_{k}\}$ in $X-B_{f}$. Then, by
Proposition \ref{degree:prop} we have that
\begin{equation}\label{f3}
\deg(f,x_{0})=\deg(f|_{V},y_{0})= \deg(f|_{V},y_{1})=\sum_{x_{i}\in
f^{-1}(y_{1})} \deg(f|_{V},x_{i}).
\end{equation}
On the other hand, since $x_{i}\in X- B_{f}$ for $i=1,\ldots,k$, by
formula (\ref{f1}), $\deg(f,x_{i})=c$ where either $c=1$ or $c=-1$.
Then, it follows from (\ref{f3}) that $\deg(f,x_{0})=kc$ and
therefore $\deg(f,x)$ has always the same sign, for any $x\in X$.
\end{proof}

\section{The Implicit Function Theorem}

  In this section we shall show a new
homological version of the Implicit Function Theorem.

\medskip

\begin{theorem}[Implicit Function Theorem] \label{IFT} Let $X$
be a locally pathwise connected Hausdorff space and let $Y,Z$ be
oriented connected topological manifolds of dimension $n$. Let $f:X
\times \, Y \,\to\, Z$ be a continuous map such that, for all $x \in
X$, the map $f_x:Y \to Z$ given by $f_x(y)=f(x,y)$ is open and
discrete. Suppose that for some $(x_0,y_0) \in X \times \, Y$,
$|\deg(f_{x_0},y_0)|=1$. Then there exists a neighborhood $V$ of
$x_0$ in $X$ and a continuous function $g:V \to Y$ such that
$f(x,g(x))=f(x_0,y_0)$, for all $x \in V$.
\end{theorem}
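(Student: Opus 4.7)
The plan is to use the Key Lemma to force the local degree of each $f_x$ at every preimage of $z_0=f(x_0,y_0)$ to be a nonzero integer of constant sign; then the hypothesis $|\deg(f_{x_0},y_0)|=1$ combined with Proposition \ref{degree:prop} will compel the preimage of $z_0$ inside a small neighborhood of $y_0$ to consist of a single point, which we use to define $g(x)$.

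First I would fix $z_0=f(x_0,y_0)$ and use discreteness of $f_{x_0}$ at $y_0$ together with local compactness of $Y$ to pick an open neighborhood $W$ of $y_0$ with $\overline{W}$ compact and $f_{x_0}^{-1}(z_0)\cap\overline{W}=\{y_0\}$. In particular $z_0\notin f_{x_0}(\partial W)$, so by continuity of $f$ and compactness of $\partial W=\overline{W}\setminus W$, plus local pathwise connectedness of $X$, I would produce a pathwise connected open neighborhood $V$ of $x_0$ for which $z_0\notin f(\{x\}\times\partial W)$ for every $x\in V$. Then $(f_x|_{\overline{W}})^{-1}(z_0)$ is a closed, hence compact subset of $\overline{W}$ that in fact lies inside $W$, so the degree $\deg(f_x|_W,z_0)$ in the sense of Definition \ref{def:grau} is defined for every $x\in V$.

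Next I would apply homotopy invariance of degree: for each $x\in V$ choose a path $\gamma\colon[0,1]\to V$ from $x_0$ to $x$ and consider $H(t,y)=f(\gamma(t),y)$ on $\overline{W}$; since $z_0\notin H(t,\partial W)$ for all $t$, we get
\[
\deg(f_x|_W,z_0)=\deg(f_{x_0}|_W,z_0)=\deg(f_{x_0},y_0)=\pm 1.
\]
Combining this with Proposition \ref{degree:prop} gives
\[
\pm 1=\sum_{y\in (f_x|_W)^{-1}(z_0)}\deg(f_x,y),
\]
and Lemma \ref{keylemma}, applied to the open discrete map $f_x\colon Y\to Z$, says that each summand is a nonzero integer of a sign independent of $y$. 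The only way such a sum can equal $\pm 1$ is if exactly one summand is present and equals $\pm 1$. Hence for every $x\in V$ there is a unique $g(x)\in W$ with $f(x,g(x))=z_0$.

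For continuity of $g$, given $x_1\in V$ and any open neighborhood $W'\subset W$ of $g(x_1)$, I would rerun the construction with $(x_1,g(x_1))$ and $W'$ in place of $(x_0,y_0)$ and $W$, producing an open neighborhood $V'\subset V$ of $x_1$ such that each $x\in V'$ has a unique preimage of $z_0$ in $W'$; uniqueness of the preimage inside the larger $W$ forces this to be $g(x)$, so $g(V')\subset W'$. The main obstacle is the sign-count step: without the Key Lemma the sum in Proposition \ref{degree:prop} could equal $\pm 1$ through cancellation, yielding several preimages and no well-defined $g(x)$. It is precisely the constant-sign conclusion of Lemma \ref{keylemma} that rules this out and is the heart of the proof; the homotopy-invariance argument for the degree, and the care needed to keep preimages trapped inside $W$, are the technical points to handle cleanly.
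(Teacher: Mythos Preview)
Your proposal is correct and follows essentially the same approach as the paper: both isolate $y_0$ via discreteness, use a compactness/continuity argument to trap preimages of $z_0$ near $y_0$ for $x$ close to $x_0$, invoke homotopy invariance of degree along a path in a pathwise connected $V$ to get $|\deg(f_x|_W,z_0)|=1$, and then apply the Key Lemma together with Proposition~\ref{degree:prop} to force the preimage to be a single point. The only cosmetic differences are that the paper packages the trapping step with an auxiliary compact $K\subset\operatorname{Int}(W)$ and net arguments (for both the existence of $V$ and the continuity of $g$), whereas you use $\partial W$-avoidance and a bootstrap of the whole construction for continuity; the substance is identical.
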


\begin{proof} Let $z_0=f(x_0,y_0)$. Since $Y$ is locally compact and $f_{x_{0}}
$ is a discrete map, we can choose a compact neighborhood $W\subset
Y$ of $y_{0}\in Y$ satisfying
\begin{eqnarray} (f_{x_{0}})^{-1}(z_{0})\cap W= y_{0}. \end{eqnarray}
We will first show that  {\it for any compact neighborhood $K\subset
{\mbox{\rm Int}}(W)$ containing $y_{0}$, there exists a neighborhood
$V$ of $x_{0}$ such that}
\begin{eqnarray} \label{step1}
({{f_{x}}_{|}}_{W})^{-1}(\{z_{0}\})\subseteq K, \hspace{1cm}\forall x\in V.
\end{eqnarray}
\noindent In fact, suppose that for each neighborhood $V$ of $x_{0}$
there exists  $(x_{\mbox{\tiny $V$}},y_{\mbox{\tiny $V$}})$ in $V
\times (W-K)$ such that $f(x_{\mbox{\tiny $V$}},y_{\mbox{\tiny
$V$}})=z_{0}$. Let us consider a genera\-lized sequence, called a
net, $\left((x_{\mbox{\tiny $V$}},y_{\mbox{\tiny
$V$}})\right)_{\mbox{\tiny {V}}\in \mathcal{V}}$, where
$\mathcal{V}$ is the collection of all the neighborhoods of $x_{0}$,
partially ordered by reverse inclusion (for details see \cite[pg.187
and 188]{Munkres}). Therefore, lim $x_{\mbox{\tiny $V$}}=x_{0}$ and
since $(y_{\mbox{\tiny $V$}})$ is a net contained in the compact
subset $W-\mbox{\rm Int}(K)$, there exists $y_{1}\in W-\mbox{\rm
Int}(K)$ which is a limit point of some convergent subnet of
$(y_{\mbox{\tiny $V$}})$. Hence, $(x_{0},y_{1})$ is a limit point of
some subnet of $(x_{\mbox{\tiny $V$}},y_{\mbox{\tiny $V$}})$. Since
$f$ is a continuous map, one has $f(x_{0},y_{1})=z_{0}$ which
implies that $y_{1}=y_{0}$, contradicting the fact that $y_{1}\in
W-\mbox{\rm Int}(K)$. Therefore, there exists a neighborhood $V$ of
$x_{0}$ satisfying (\ref{step1}).

Choose a compact neighborhood $K\subset {\mbox{\rm Int}}(W)$ of
$y_{0}$. It follows from (\ref{step1}) that for each $x\in V$, the
map of pairs $f_{x}:(W,W-K)\to (Z,Z-z_{0})$ is well defined. Since
$X$ is locally pathwise connected, we can assume that $V$ is a
pathwise connected neighborhood of $x_{0}$ and therefore, for each
$x$ in $V$, there exists a path $\alpha $ in $V$ joining $x_{0}$ to
$x$. We define the homotopy  of pairs $H:(I\times W,I\times
(W-K))\to (Z,Z-z_{0})$ given by
\begin{eqnarray}\label{homotopy}
H(t,y)=f(\alpha(t),y)=f_{\alpha(t)}(y), \hspace{1cm} \forall (t,y)\in I\times W.
\end{eqnarray}

Since $W$ is compact and $H$ is a proper homotopy between
$f_{x_{0}}|_{W}$ and $f_{x}|_{W}$, we obtain
$\deg(f_{x}|_{W},z_{0})=\deg(f_{x_{0}}|_{W},z_{0})$. Now, as
$\deg(f_{x_{0}}|_{W},z_{0})= \deg(f_{x_{0}},y_{0})$ and as, by
hypotheses, $|\deg(f_{x_{0}},y_{0})|=1$ we obtain
\begin{eqnarray}\label{deg1}|\deg(f_{x}|_{W},z_{0})|=|\deg(f_{x_{0}}|_{W},z_{0})|
=|\deg(f_{x_{0}},y_{0})|=1.\end{eqnarray}

\noindent Since  $f_{x}$ is open and discrete, it follows from Lemma
\ref{keylemma} that $\deg(f_{x},y)\not= 0$, for any $y\in Y$, and
$\deg(f_{x},y)$  has always the same sign. Thus, if
$(f_{x}|_{W})^{-1}(z_{0})=\{y_{1}, \cdots, y_{k}\}\subset W$ with
$k\geq 2$, we have that
\begin{eqnarray}
 \hspace{0.2cm}  |\deg(f_{x}|_{W},z_{0})|=|\sum_{i=1}^{k}\deg(f_{x},y_{i})|=\sum_{i=1}^{k}|
\deg(f_{x},y_{i})|>1,
\end{eqnarray}
which contradicts (\ref{deg1}). Therefore, for each $x\in V$  there
exists a unique $y\in K\subset W$ such that
$(f_{x}|_{W})^{-1}(z_{0})=y$; in other words, for each $x\in V$
there exists a unique $y=g(x)\in K$ such that
$f_{x}(g(x))=f(x,g(x))=z_{0}$ for each $x\in V$.

We will show that the map $g:V\to K\subset Y$ is continuous. Let $A$
be a neighborhood  of $y=g(x)$ such that $A\subset K$. Let us assume
that for any neighborhood $U$ of $x$, there exists $x_{\mbox{\tiny
$U$}}$ in $U$ such that $g(x_{\mbox{\tiny $U$}})$ belongs to the
compact subset  $K-A$. Let $\bar{y}$ in $K-A$ be the limit point of
the some convergent subnet of $(g(x_{\mbox{\tiny $U$}}))$. Thus,
$(x,\bar{y})$ is the limit point of the some convergent subnet of
$(x_{\mbox{\tiny $U$}},g(x_{\mbox{\tiny $U$}}))$. Since $f$ is
continuous and $g$ is given implicitly by the equation
$f(x_{\mbox{\tiny $U$}},g(x_{\mbox{\tiny $U$}}))=z_{0}$ we have that
$f(x,\bar{y})=z_{0}$, which implies that $y=\bar{y}$, contradicting
the fact that $\overline{y}\in K-A$.
\end{proof}

\begin{corollary}\label{coro-dif}   Let $X$ be a locally pathwise
connected Hausdorff space. Let $U\subset \mathbb{R}^{n}$ be an open
subset of $\mathbb{R}^{n}$ and let $f:X\times U\to \mathbb{R}^{n}$
be a continuous map. Suppose that $f_{x}:U\to \mathbb{R}^{n}$ is a
differentiable (not necessarily $C^{1}$) map without critical
points, for each $x\in X$. Then there exist a neighborhood $V$ of
$x_{0}$ and a continuous function $g:V\to U$ such that
$f(x,g(x))=z_{0}$, for each $x\in V$.

\end{corollary}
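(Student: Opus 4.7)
The plan is to reduce the corollary to an application of Theorem~\ref{IFT}. Set $z_0 := f(x_0, y_0)$, and replace $U$ by the connected component $Y$ of $U$ containing $y_0$, so that $Y$ is an open, oriented, connected topological $n$-manifold; the target $\mathbb{R}^n$ is of course such a manifold as well. Everything in the hypothesis of Theorem~\ref{IFT} will then be in place once we know that each restriction $f_x|_Y : Y \to \mathbb{R}^n$ is open and discrete and that $|\deg(f_{x_0}, y_0)| = 1$.

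The single auxiliary statement I would prove is the following: \emph{if $\varphi : U \to \mathbb{R}^n$ is differentiable at $y \in U$ with $\det \varphi'(y) \neq 0$, then $\varphi$ is open and discrete at $y$, and $|\deg(\varphi, y)| = 1$.} From the first-order expansion $\varphi(y+h) = \varphi(y) + \varphi'(y)\,h + r(h)$ with $|r(h)| = o(|h|)$, the invertibility of $\varphi'(y)$ gives $|\varphi(y+h) - \varphi(y)| \geq \|\varphi'(y)^{-1}\|^{-1}\,|h| - |r(h)| > 0$ for every sufficiently small $h \neq 0$, which already yields discreteness at $y$. For the degree I would use the straight-line homotopy $H_s(h) = \varphi'(y)\,h + (1-s)\,r(h)$ on a small closed ball $\overline{B}_\varepsilon(0)$: exactly the same estimate shows $H_s(h) \neq 0$ on $\partial B_\varepsilon(0)$ for every $s \in [0,1]$, so by homotopy invariance the local degree of $\varphi$ at $y$ coincides with the local degree of the linear isomorphism $\varphi'(y)$, namely $\operatorname{sign}\det \varphi'(y) = \pm 1$. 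Non-vanishing of this degree also forces $\varphi(y)$ to lie in the interior of $\varphi(B_\varepsilon(y))$, giving openness at $y$.

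Applying this local claim to every $f_x$ at every point of $U$ shows that each $f_x$, and hence each restriction $f_x|_Y$, is an open discrete map, and in particular that $|\deg(f_{x_0}, y_0)| = 1$. Theorem~\ref{IFT} then provides the neighborhood $V$ of $x_0$ in $X$ and the continuous map $g : V \to Y \subseteq U$ with $f(x, g(x)) = z_0$ for every $x \in V$, which is what is claimed. The main obstacle is precisely the local-degree computation: since we only assume pointwise differentiability, the classical $C^1$ inverse function theorem is not available, and one must linearize topologically through the straight-line homotopy above, which is exactly where the $o(|h|)$ behaviour of the Taylor remainder becomes essential.
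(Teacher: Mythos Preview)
Your proof is correct and follows the same route as the paper: reduce to Theorem~\ref{IFT} by verifying that each $f_x$ is open and discrete with $|\deg(f_{x_0},y_0)|=1$. The paper's own proof simply asserts that a differentiable map without critical points is open and discrete and then invokes Theorem~\ref{IFT}, so your straight-line homotopy argument merely fills in details (and the passage to a connected component) that the paper leaves implicit.
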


\begin{proof} Since $f_{x}:U\to \mathbb{R}^{n}$ is a
differentiable (not necessarily $C^{1}$) map without critical
points, we have that  $f_{x}$ is an open and {\it discrete} map, for
each $x$ in $X$. Therefore, the result follows from Theorem
\ref{IFT}
\end{proof}

\begin{corollary}\label{coro-eq-dif} Let $I \times V
\times W$ be an open neighborhood of $(t_0,x_0,y_0)$ in $\R \times
\R^n \times \R^m$ and let $F:(I \times V \times W, (t_0,x_0,y_0))
\longmapsto (\R^m,0)$ be a continuous map. Suppose that, for all
$(t,x) \in I \times V$, the map $y \in W \longrightarrow F(t,x,y)$
is differentiable (but not necessarily $C^1$) and without critical
points. Then the differential equation
$$F(t,x,x')=0, \quad x(t_0)=x_0, \quad x'(t_0)=y_0$$
has a solution in some interval $(t_0-\varepsilon,t_0+\varepsilon)$.
\end{corollary}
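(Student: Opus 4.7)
The plan is to reduce the implicit differential equation to an explicit first-order ODE and then invoke Peano's existence theorem. First I would apply Corollary \ref{coro-dif} to the map $F$, taking the locally pathwise connected Hausdorff space ``$X$'' of that corollary to be $I \times V$, the open set ``$U \subset \R^n$'' to be $W \subset \R^m$, and the target ``$\R^n$'' to be $\R^m$. The hypothesis is satisfied at $(t_0,x_0) \in I \times V$ with base value $z_0 = F(t_0,x_0,y_0) = 0$, since by assumption each slice $y \mapsto F(t,x,y)$ is differentiable on $W$ without critical points. Corollary \ref{coro-dif} therefore produces an open neighborhood $V' \subset I \times V$ of $(t_0,x_0)$ and a continuous map $g: V' \to W$ with
\[
F(t,x,g(t,x)) = 0 \quad \text{for all } (t,x) \in V'.
\]

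Next I would verify that $g(t_0,x_0) = y_0$. This does not follow from the mere statement of Corollary \ref{coro-dif}, but it is built into the construction in the proof of Theorem \ref{IFT}: there one starts with a compact neighborhood of $y_0$ on which $y_0$ is the only preimage of $z_0$ under $F(t_0,x_0,\cdot)$, and the implicit solution is forced to lie inside that neighborhood. Hence $g(t_0,x_0)$ must equal $y_0$. I would then shrink $V'$, using continuity, to a product neighborhood $(t_0-\delta,t_0+\delta) \times V''$ on which $g$ is continuous and bounded.

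With this in hand, the implicit initial value problem $F(t,x,x')=0$, $x(t_0)=x_0$, $x'(t_0)=y_0$ is equivalent (as long as the trajectory stays inside $V'$) to the explicit initial value problem
\[
x'(t) = g(t,x(t)), \qquad x(t_0) = x_0.
\]
Since $g$ is continuous on a neighborhood of $(t_0,x_0)$, Peano's existence theorem yields a $C^1$ solution $x(\cdot)$ defined on some interval $(t_0-\varepsilon,t_0+\varepsilon)$. The auxiliary condition $x'(t_0) = y_0$ is then automatic, because evaluating the equation at $t=t_0$ gives $x'(t_0) = g(t_0,x_0) = y_0$.

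The only nontrivial point beyond citing Corollary \ref{coro-dif} and Peano's theorem is the identification $g(t_0,x_0) = y_0$; this is the step I expect to be the main obstacle, and it must be justified by reference to the uniqueness of the implicit solution within the compact neighborhood $K$ chosen in the proof of Theorem \ref{IFT}. Everything else is a routine reduction to a classical continuous ODE.
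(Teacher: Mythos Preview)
Your proposal is correct and follows exactly the paper's route: apply Corollary~\ref{coro-dif} to obtain a continuous $g$ with $F(t,x,g(t,x))=0$, then invoke Peano's theorem for the explicit ODE $x'=g(t,x)$. In fact you are more careful than the paper, which simply asserts the initial condition $x'(t_0)=y_0$ without justifying $g(t_0,x_0)=y_0$; your observation that this follows from the construction in Theorem~\ref{IFT} (uniqueness of the preimage inside the compact neighborhood $K$) is the right way to close that small gap.
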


\begin{proof} By Corollary \ref{coro-dif}, we have that there
exists a neighborhood $(t_0-\varepsilon_1,t_0+\varepsilon_1) \times
V_1$ of $(t_0,x_0)$ and a function
$g:(t_0-\varepsilon_1,t_0+\varepsilon_1) \times V_1 \longrightarrow
W$ such that
$$F(t,x,g(t,x))=0$$
By Peano Theorem, there exists a solution
$\varphi:(t_0-\varepsilon,t_0+\varepsilon)$ of the differential
equation
$$x'=g(t,x), \quad x(t_0)=x_0, \quad x'(t_0)=y_0.$$
This implies the Corollary.
\end{proof}

\section{Generalizations of the Darboux theorem}

 The classical Darboux theorem states that if $f:[a,b]\to
\mathbb{R}$ is a differentiable map which has the property that
$f^{\prime}(a)< 0$ and $f^{\prime}(b)>0$, then there exists a point
$c\in (a,b)$ such that $f^{\prime}(c)=0$.

 In order to obtain some versions of the Darboux theorem we apply
the results previously obtained.

\begin{theorem}[A homological version of the Darboux theorem]
\label{HDT} Let $M$ and $N$ be oriented connected topological
manifolds of dimension $n$ and let $f:M\to N$ be a continuous map.
Suppose that there exist $x_{0}$ and $x_{1}$ in $M$ such that
$\deg(f,x_{0})<0$ and $\deg(f,x_{1})>0$, then there exists $x_{2}$
in $M$ such that $\deg(f,x_{2})=0$
\end{theorem}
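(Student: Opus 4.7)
The plan is to argue by contradiction, with the Key Lemma (Lemma \ref{keylemma}) doing the heavy lifting. I would assume that $\deg(f,x)\neq 0$ for every $x\in M$ and derive a contradiction with the hypothesis $\deg(f,x_0)<0<\deg(f,x_1)$.

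First, by the convention of Definition \ref{def:graugeneralizado}, the assumption $\deg(f,x)\neq 0$ for all $x\in M$ forces $f$ to be discrete at every point of $M$. The only nontrivial task is then to upgrade this to showing that $f$ is also an \emph{open} map, after which I can apply Lemma \ref{keylemma} directly.

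To prove openness, fix $x\in M$ and an open neighborhood $U$ of $x$. Using local compactness of $M$ together with discreteness of $f$ at $x$, I would pick an open $V$ with $x\in V$, $\overline{V}\subset U$ compact, and $f^{-1}(f(x))\cap \overline{V}=\{x\}$. Then $f(\partial V)$ is compact and avoids $f(x)$, so there is an open connected neighborhood $W$ of $f(x)$ with $\overline{W}$ compact and $\overline{W}\cap f(\partial V)=\emptyset$. This choice ensures that $(f|_V)^{-1}(\overline{W})$ is closed in the compact set $\overline{V}$, hence compact, so Proposition \ref{degree:prop1} applies and yields that $\deg(f|_V,y)$ is constant for $y\in\overline{W}$. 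By Proposition \ref{degree:prop} this constant equals $\deg(f,x)\neq 0$. Consequently every $y\in W$ has a preimage in $V$, so $W\subset f(V)\subset f(U)$, showing $f$ is open.

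With $f$ now open and discrete, Lemma \ref{keylemma} applies and forces $\deg(f,x)$ to have one and the same sign for every $x\in M$, flatly contradicting $\deg(f,x_0)<0$ and $\deg(f,x_1)>0$. Therefore some $x_2\in M$ must satisfy $\deg(f,x_2)=0$. The main obstacle in this argument is Step 2, i.e.\ deducing openness from pointwise discreteness and nonvanishing local degree; once openness is established, the Key Lemma closes the argument immediately.
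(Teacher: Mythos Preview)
Your argument is correct and follows exactly the same route as the paper: assume $\deg(f,x)\neq 0$ everywhere, deduce that $f$ is discrete (from Definition \ref{def:graugeneralizado}) and open, then invoke Lemma \ref{keylemma} to force a constant sign and reach a contradiction. The paper's proof merely asserts the openness step without justification, whereas you have supplied the standard local-degree argument (via Propositions \ref{degree:prop} and \ref{degree:prop1}) that actually proves it; so your version is in fact more complete than the original.
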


\begin{proof} If $f$ is not a {\it discrete map} at some $x\in M$,
it follows from Definition \ref{def:graugeneralizado} that
$\deg(f,x)=0$. In this way, suppose that $f$ is a {\it discrete map}
at $x$, for every $x\in M$. Therefore, if $\deg(f,x)\not= 0$ for any
$x\in M$, then $f$ is also an open map and from Lemma \ref{keylemma}
we conclude that $\deg(f,x)$ has always the same sign, for any $x\in
M$, which is a contradiction.\end{proof}

 The following theorems are differentiable versions of the Darboux
theorem. Let us observe that when $f$ is of class $C^{1},$ these
results are trivial.

\medskip

\begin{corollary}\label{DDT} Let $M$ and $N$ be oriented
connected topological manifolds of dimension $n$ and let $f:M\to N$
be a differentiable map. Suppose that there exist $x_{0}$ and
$x_{1}$ in $M$ such that $\det[f^{\prime}(x_{0})]<0$ and
$\det[f^{\prime}(x_{1})]>0$. Then $f$ has a critical point.
\end{corollary}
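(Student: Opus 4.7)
The plan is to apply Theorem \ref{HDT} to $f$ directly, after identifying the local topological degree at every point of differentiability with invertible derivative with the sign of the Jacobian determinant. The key intermediate claim is: at any $x \in M$ where $f'(x)$ is a linear isomorphism, $f$ is locally injective (hence discrete in the sense of Definition \ref{def:grau}) and $\deg(f,x) = \mathrm{sign}(\det f'(x))$. Local injectivity is immediate from the expansion $f(x+h) = f(x) + f'(x)h + o(|h|)$: on a small enough coordinate ball around $x$, the linear part dominates the remainder and forces $f(x+h) \neq f(x)$ for $h \neq 0$.

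To compute the local degree, I would work in coordinate charts around $x$ and $f(x)$ and compare $f$ with its affine approximation $L(y) = f(x) + f'(x)(y-x)$ via the straight-line homotopy $H_t = (1-t)f + tL$ on a small ball $B$ about $x$. For $B$ small enough, the uniform Taylor remainder bound ensures $H_t(y) \ne f(x)$ for every $y \in B \setminus \{x\}$ and every $t \in [0,1]$, yielding a homotopy of pairs $(B, B - \{x\}) \to (N, N - \{f(x)\})$. Hence $f$ and $L$ induce the same map on $H_n$, and since the local degree of an invertible affine map is the sign of its determinant, the claim follows.

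With this in hand, the hypotheses give $\deg(f,x_0) = -1 < 0$ and $\deg(f,x_1) = +1 > 0$, so Theorem \ref{HDT} produces $x_2 \in M$ with $\deg(f,x_2) = 0$. Since the key claim forces $\deg(f,x) \in \{\pm 1\}$ at every $x$ where $f'(x)$ is invertible, $f'(x_2)$ must be singular; this $x_2$ is the required critical point. The main obstacle is the degree-identification step, since for merely differentiable (not $C^1$) $f$ the classical inverse function theorem is unavailable and $f$ need not be a local homeomorphism near $x$; however, only a uniform Taylor remainder estimate on a small enough ball is needed to keep $H_t$ off $f(x)$, and this survives without any $C^1$ regularity assumption.
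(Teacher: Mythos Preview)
Your argument is correct and essentially matches the paper's: both hinge on the identification $\deg(f,x)=\mathrm{sign}\det f'(x)$ at regular points (which you justify explicitly via the straight-line homotopy to the linearization, while the paper simply asserts $|\deg(f,x)|=1$) together with the sign-persistence of the local degree. The only organizational difference is that you invoke Theorem~\ref{HDT} to produce a degree-zero point $x_2$ and then deduce it must be critical, whereas the paper argues by contradiction directly from Lemma~\ref{keylemma} (assume no critical points, so $f$ is open and discrete everywhere, forcing constant sign of $\deg(f,\cdot)$).
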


\begin{proof} Suppose that $f$ does not have critical points. In
this case, one has that for any $x\in M$, $|\deg(f,x)|=1\not= 0$.
Thus, $f$ is an open and {\it discrete} map and it follows from
Lemma \ref{keylemma} that $\deg(f,x)$ has always the same sign,
which is a contradiction.
\end{proof}

\begin{corollary} \label{DDT1}Consider $f,g:M\to \mathbb{R}^{n}$
differentiable maps, where $M$ is an oriented connected topological
manifold of dimension $n$. Suppose that there exist $\alpha\in
\mathbb{R}$ and $x_{0},x_{1}\in M^{n}$ such that
\begin{equation} \det[f^{\prime}(x_{0})-\alpha
g^{\prime}(x_{0})]<0 \hspace{.4cm}\mbox{and}\hspace{.4cm} \det
[f^{\prime}(x_{1})-\alpha g^{\prime}(x_{1})]>0.\nonumber
\end{equation}
Then, there exists $x_{2}\in M$ such that $\det[f^{\prime}(x_{2})-
\alpha g^{\prime}(x_{2})]$ is equal to zero.
\end{corollary}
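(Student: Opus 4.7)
The plan is to reduce this corollary directly to Corollary~\ref{DDT} by considering an auxiliary map. Specifically, I would define
\[
h : M \longrightarrow \mathbb{R}^n, \qquad h(x) = f(x) - \alpha\, g(x).
\]
Since $f$ and $g$ are differentiable and differentiability is preserved under linear combinations, $h$ is differentiable at every point of $M$ with derivative $h'(x) = f'(x) - \alpha g'(x)$. The target $\mathbb{R}^n$ is, of course, an oriented connected topological manifold of dimension $n$, so the hypotheses of Corollary~\ref{DDT} apply to $h$.

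The hypotheses on $f$ and $g$ translate immediately into the hypotheses of Corollary~\ref{DDT} for $h$: at $x_0$ we have $\det[h'(x_0)] = \det[f'(x_0) - \alpha g'(x_0)] < 0$, and at $x_1$ we have $\det[h'(x_1)] = \det[f'(x_1) - \alpha g'(x_1)] > 0$. Thus, by Corollary~\ref{DDT} applied to $h : M \to \mathbb{R}^n$, the map $h$ has a critical point; that is, there exists $x_2 \in M$ with $\det[h'(x_2)] = 0$, which is precisely the conclusion $\det[f'(x_2) - \alpha g'(x_2)] = 0$.

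There is essentially no obstacle here, since this corollary is a clean consequence of Corollary~\ref{DDT}. The only conceptual point worth flagging is that one must verify that the conditions required to invoke the key machinery (openness and discreteness of $h$ when it has no critical points, which drives the proof of Corollary~\ref{DDT} through Lemma~\ref{keylemma}) are automatically handled inside the statement of Corollary~\ref{DDT} itself; no extra work is needed. The proof will therefore be just two or three lines.
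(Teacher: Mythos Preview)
Your approach is correct and matches the paper's own proof exactly: the paper simply defines $h = f - \alpha g$ and applies Corollary~\ref{DDT}. Your additional verification that $h$ is differentiable with $h' = f' - \alpha g'$ and that the sign hypotheses transfer is sound and, if anything, more detailed than the paper's one-line argument.
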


\begin{proof} It suffices to apply Theorem \ref{DDT} for the map
$h=f-\alpha g$.
\end{proof}

 As a direct consequence of Theorem \ref{DDT1} one has the
following version of the classical Darboux theorem for
differentiable maps from $\mathbb{R}^{n}$ into $\mathbb{R}^{n}$.

\begin{corollary}{\bf (Differentiable Darboux theorem)}
\label{DDT2} Let $f:U\to \mathbb{R}^{n}$ be a differentiable map,
where $U$ is an open connected subset of $\mathbb{R}^{n}$. Suppose
that there exist $\alpha\in \mathbb{R}$ and $x_{0},x_{1}\in U$ such
that $\det[f^{\prime}(x_{0})-\alpha
 I]<0$ and $\det[f^{\prime}(x_{1})-\alpha I]>0$. Then, there exists $x_{2}\in U$
such that $\det[f^{\prime}(x_{2})-\alpha I]$ is equal to zero $($
i.e. $\alpha$ is an eigenvalue of $f^{\prime}(x_{2}) )$.\qed
\end{corollary}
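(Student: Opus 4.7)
The plan is to reduce this statement to Corollary \ref{DDT1} by a judicious choice of the auxiliary map $g$. Specifically, I would take $g:U \to \mathbb{R}^n$ to be the identity $g(x)=x$. Then $g$ is (smooth and hence) differentiable and $g'(x)=I$ for every $x \in U$, so the expression $f'(x) - \alpha g'(x)$ appearing in the hypothesis and conclusion of Corollary \ref{DDT1} collapses to $f'(x) - \alpha I$. The sign hypotheses on $\det[f'(x_0)-\alpha I]$ and $\det[f'(x_1)-\alpha I]$ then translate verbatim into the hypotheses of Corollary \ref{DDT1}.

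Before invoking DDT1, one must check that $M := U$ and $N := \mathbb{R}^n$ fit its framework, i.e.\ that $U$ is an oriented connected topological manifold of dimension $n$. This is immediate: $U$ is open in $\mathbb{R}^n$ so it is an $n$-dimensional topological manifold, it inherits the canonical orientation from $\mathbb{R}^n$, and connectedness is given by hypothesis. Corollary \ref{DDT1} applied to the pair $(f, g)=(f,\mathrm{id})$ on $M=U$ then supplies $x_2 \in U$ with $\det[f'(x_2) - \alpha g'(x_2)] = \det[f'(x_2) - \alpha I] = 0$, which is the desired conclusion. The parenthetical assertion that $\alpha$ is an eigenvalue of $f'(x_2)$ is simply the standard characterization of eigenvalues as roots of the characteristic polynomial.

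Since the result is really just the specialization $g=\mathrm{id}$ of the already-proved Corollary \ref{DDT1}, there is no genuine obstacle to overcome; the only step requiring attention is bookkeeping, namely verifying that every hypothesis of DDT1 transfers correctly under this choice of $g$, which it does. Consequently the proof is essentially a one-liner of the form ``apply Corollary \ref{DDT1} with $g(x)=x$''.
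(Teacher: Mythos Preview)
Your proposal is correct and matches the paper's approach exactly: the paper states that Corollary~\ref{DDT2} is a ``direct consequence'' of Corollary~\ref{DDT1} and gives no proof beyond the \qedsymbol, which amounts precisely to your specialization $g(x)=x$. Your additional verification that $U$ inherits the required structure (oriented, connected, $n$-manifold) is appropriate bookkeeping that the paper leaves implicit.
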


 Now consider $f$ and $U$ under the same assumptions of Corollary
\ref{DDT2} and let us denote by
$p_{0}(\lambda)=\det[f^{\prime}(x_{0}))-\lambda I]$ and by
$p_{1}(\lambda)=\det[f^{\prime}(x_{1}))-\lambda I]$. Let $n_{0}$ and
$n_{1}$ natural numbers. In these conditions, we prove the following

\begin{corollary} \label{DDT3}Let $x_{0},x_{1}\in U$ and $\alpha \in
\mathbb{R}$ such that $p_{0}(\lambda)=q_{0}(\lambda)\cdot (\alpha -
\lambda)^{n_{0}}$ and $p_{1}(\lambda)=q_{1}(\lambda)(\alpha
-\lambda)^{n_{1}}$, where $q_{0}(\lambda)$ and $q_{1}(\lambda)$ are
not null polynomials. Suppose that $n_{0}$ is odd and $n_{1}$ is
even. If $q_{0}(\alpha)q_{1}(\alpha)>0$ then there exists $\delta>0$
satisfying the following condition: for each $\lambda \in
(\alpha,\alpha+\delta)$ there exists $x_{2}=x_{2}(\lambda)$ such
that $\det[f^{\prime}(x_{2})-\lambda I]$ is equal to
zero.\end{corollary}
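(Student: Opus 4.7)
The plan is to reduce Corollary \ref{DDT3} to the Differentiable Darboux theorem (Corollary \ref{DDT2}) applied to $f$ at parameters $\lambda$ slightly larger than $\alpha$. For any such $\lambda$ it suffices to check that the two numbers $p_0(\lambda)=\det[f'(x_0)-\lambda I]$ and $p_1(\lambda)=\det[f'(x_1)-\lambda I]$ have opposite signs, because Corollary \ref{DDT2} (with the $\alpha$ appearing there taken to be our $\lambda$) will then yield $x_2=x_2(\lambda)\in U$ with $\det[f'(x_2)-\lambda I]=0$.

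First I would exploit the factorizations $p_0(\lambda)=q_0(\lambda)(\alpha-\lambda)^{n_0}$ and $p_1(\lambda)=q_1(\lambda)(\alpha-\lambda)^{n_1}$ to read off the sign of each $p_i$ immediately to the right of $\alpha$. For $\lambda>\alpha$ the factor $(\alpha-\lambda)^{n_0}$ is negative (since $n_0$ is odd), while $(\alpha-\lambda)^{n_1}$ is positive (since $n_1$ is even). Since $q_0$ and $q_1$ are (continuous) polynomials and the hypothesis $q_0(\alpha)q_1(\alpha)>0$ says that $q_0(\alpha)$ and $q_1(\alpha)$ are nonzero and share a common sign, there exists $\delta>0$ such that $q_0$ and $q_1$ retain those signs throughout the interval $(\alpha-\delta,\alpha+\delta)$.

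Combining these sign determinations, for every $\lambda\in(\alpha,\alpha+\delta)$ the value $p_0(\lambda)$ has the sign of $-q_0(\alpha)$, whereas $p_1(\lambda)$ has the sign of $q_1(\alpha)$; since $q_0(\alpha)$ and $q_1(\alpha)$ have the same sign, $p_0(\lambda)$ and $p_1(\lambda)$ have opposite signs. Corollary \ref{DDT2} applied with this $\lambda$ then produces the desired $x_2=x_2(\lambda)\in U$ satisfying $\det[f'(x_2)-\lambda I]=0$.

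There is no real conceptual obstacle here; the whole argument is essentially a parity and continuity bookkeeping arranged so that Corollary \ref{DDT2} becomes directly applicable. The only point to keep track of is that $\delta$ has to be small enough that neither $q_0$ nor $q_1$ vanishes on $(\alpha-\delta,\alpha+\delta)$, so that the signs of $p_0(\lambda)$ and $p_1(\lambda)$ are genuinely well defined and nonzero throughout $(\alpha,\alpha+\delta)$.
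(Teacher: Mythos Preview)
Your proof is correct and follows essentially the same approach as the paper: both argue that the parity of $n_0$ and $n_1$ together with $q_0(\alpha)q_1(\alpha)>0$ force $p_0(\lambda)$ and $p_1(\lambda)$ to have opposite signs for $\lambda$ slightly greater than $\alpha$, and then invoke Corollary~\ref{DDT2}. Your version simply spells out the sign bookkeeping and the choice of $\delta$ more explicitly than the paper does.
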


\begin{proof} Since $n_{0}$ is odd and $n_{1}$ is even, one has
that, for $\lambda > \alpha$ close to $\alpha$, the polynomials
$p_{0}(\lambda)$ and $p_{1}(\lambda)$ have different signs. It
follows from Corollary \ref{DDT2} that if $\delta > 0$ is small
enough and $\lambda \in (\alpha,\alpha + \delta)$, there exists
$x_{2}=x_{2}(\lambda)$ such that
$p_{2}(\lambda)=\det[f^{\prime}(x_{2})-\lambda I]$ is equal to zero.
\end{proof}

\begin{remark} Theorem \ref{DDT3} remains the same in the case that
$q_{0}(\alpha)q_{1}(\alpha)<0$ and $\lambda \in (\alpha-\delta,\alpha)$.
\end{remark}

\end{document}